\newtheorem{theorem}{Theorem}[section]
\newtheorem{lemma}{Lemma}[section]
\def\geq{\geqslant}\def\leq{\leqslant}
\begin{document}
\title{\bf A characterization of mixed $\lambda$-central $\mathrm{BMO}$ space via the commutators of Hardy type operators}
\author{Wenna Lu and Jiang Zhou\thanks{The research is supported by the NNSF of China (No. 12061069).} \\[.5cm]}

\date{}
\maketitle

{\bf Abstract.} In this paper, we give a characterization of mixed $\lambda$-central bounded mean oscillation space $\mathrm{CBMO}^{\vec{q},\lambda}(\mathbb{R}^{n})$ via the boundedness of the commutators of $n$-dimensional Hardy operator $\mathcal{H}$ and its dual operator $\mathcal{H}^{*}$ on mixed Lebesgue space $L^{\vec{p}}(\mathbb{R}^{n})$. In addition, we also establish the boundedness of commutators $\mathcal{H}_{b}$ and $\mathcal{H}^{*}_{b}$ generated with $\mathrm{CBMO}^{\vec{q},\lambda}(\mathbb{R}^{n})$ function $b$ on mixed $\lambda$-central Morrey space $\mathcal{B}^{\vec{q},\lambda}(\mathbb{R}^{n})$, respectively.

{\bf Key Words:} $n$-dimensional Hardy operator; commutator; mixed $\lambda$-central bounded mean oscillation space; mixed Lebesgue space; mixed $\lambda$-central Morrey space

{\bf MSC(2020) subject classification:} 42B20, 42B25, 42B35.

\baselineskip 15pt
\section{Introduction}
\quad Let $f$ be a non-negative integrable function on $\mathbb{R}^{+}$. The classical Hardy operator and its dual operator are defined by
$$Hf(x):=\frac{1}{x}\int^{x}_{0}f(t)dt, \qquad H^{*}f(x):=\int^{\infty}_{x}\frac{f(t)}{t}dt, \quad x>0.$$
The Hardy operators $H$ and $H^{*}$ are adjoint mutually:
$$\int^{\infty}_{0}(Hf)(x)g(x)dx=\int^{\infty}_{0}f(x)(H^{*}g)(x)dx,$$
when $f\in L^{p}(\mathbb{R}^{+}), g\in L^{q}(\mathbb{R}^{+}), 1<p<\infty, 1/p+1/q=1$.

In 1920, Hardy \cite{H1} first introduced the classical Hardy operators, and established the most celebrated Hardy's integral inequality as follows:
$$\|Hf\|_{L^{p}}\leq\frac{p}{p-1}\|f\|_{L^{p}}, \qquad \|H^{*}f\|_{L^{q}}\leq\frac{p}{p-1}\|f\|_{L^{q}}.$$
Moreover,
$$\|H\|_{L^{p}\rightarrow L^{p}}=\|H^{*}\|_{L^{q}\rightarrow L^{q}}=\frac{p}{p-1},$$
where $1/p+1/q=1$.

In 1976, the classical Hardy operator was extended to $n$-dimensional form by Faris \cite{F1}, that is, let $f$ be a locally integrable function in $\mathbb{R}^{n}$, the $n$-dimensional Hardy operator $\mathcal{H}$ is defined by
$$\mathcal{H}f(x)=\frac{1}{|x|^{n}}\int_{|t|<|x|}f(t)dt, \quad x\in\mathbb{R}^{n}\backslash\{0\},$$
The norm of $\mathcal{H}$ on $L^{p}(\mathbb{R}^{n})$ was evaluated in \cite{CG} and found to be equal to that of the one-dimensional Hardy operator in \cite{HLP}, i.e.,
$$\frac{p}{p-1}|B(0,1)| \quad (1<p<\infty).$$
Similarly, the dual operator $\mathcal{H}^{*}$ of $\mathcal{H}$ is defined by
$$\mathcal{H}^{*}f(x)=\int_{|t|\geq|x|}\frac{f(t)}{|t|^{n}}dt, \quad x\in\mathbb{R}^{n}\backslash\{0\}.$$
It means that $\mathcal{H}$ and $\mathcal{H}^{*}$ satisfy
$$\int_{\mathbb{R}^{n}}(\mathcal{H}f)(x)g(x)dx=\int_{\mathbb{R}^{n}}f(x)(\mathcal{H}^{*}g)(x)dx,$$
when $f\in L^{p}(\mathbb{R}^{n}), g\in L^{q}(\mathbb{R}^{n}), 1<p<\infty, 1/p+1/q=1$.

In recent years, Hardy operator, as one of the important average operators, has increasing considerable attention and much developments. Furthermore, the commutators of Hardy operators are also an important subject in the study of Hardy type operators. Recall that for a locally integrable function $b$ and a Calder\'{o}n-Zygmund singular integral operator $T$, a well-known result of Coifman et al. \cite{CRW} stated that the commutator $T_{b}f=bTf-T(bf)$ is bounded on $L^{p}(\mathbb{R}^{n})$ if and only if the symbol function $b$ is in the bounded mean oscillation space $\mathrm{BMO}(\mathbb{R}^{n})$. An important work on the commutator of Hardy operator is due to Long and Wang \cite{LW}, who showed that the commutators of Hardy operator and its dual operator are bounded on the Lebesgue spaces if and only if the symbol function $b$ belong to the central bounded mean oscillation space, that is, a bigger space than the classical bounded mean oscillation space $\mathrm{BMO}(\mathbb{R}^{n})$. These facts indicated that the commutators of Hardy type operators are fundamentally different from those of the Calder\'{o}n-Zygmund singular integral operators and other important operators. The difference happens because the Hardy operator is an average operator around the origin, whereas the Calder\'{o}n-Zygmund singular integral operator has no such property. In 2007, Fu et al. \cite{FLW} gave the characterization of the central bounded mean oscillation space $\mathrm{CBMO}^{p}(\mathbb{R}^{n})$ via the boundedness of the commutator of Hardy type operators on different function spaces. In 2013, Zhao and Lu \cite{ZL} proved that the commutators of $n$-dimensional Hardy operator and its dual operator are bounded on Lebesgue space $L^{p}(\mathbb{R}^{n})$ if and only if the symbol function $b$ is in the $\lambda$-central bounded mean oscillation space $\mathrm{CBMO}^{q,\lambda}(\mathbb{R}^{n})$. Moreover, many other estimates of commutators of Hardy type operators on classical function spaces were obtained in \cite{FL,KY,LW,SL,ZFL}.

In 1961, Benedek and Panzone \cite{BP} introduced the mixed Lebesgue space and established some elementary properties. In fact, the more accurate structure of the mixed-norm function spaces than the corresponding classical function spaces such that the mixed-norm function spaces have more extensive applications in various areas such as potential analysis, harmonic analysis and PDEs, see \cite{AI,KC,KD,KN} and the references therein.
In particular, Dong et al. \cite{DKP} considered Stokes systems with measurable coefficients and Lions-type boundary conditions by virtue of mixed function spaces. Therefore, the topic of function spaces with the mixed norm has increasing considerable attention and much developments in recent years. In 2019, Nogayama \cite{NT1}, to generalize Morrey space and mixed Lebesgue space, introduced the mixed Morrey space, and proved the necessary and sufficient conditions for boundedness of commutators of fractional integral operators on mixed Morrey space in \cite{NT2}. On the other hand, Alvarez et al. \cite{AL} introduced the $\lambda$-central bounded mean oscillation space and $\lambda$-central Morrey space in 2000. After that, Fu et al. \cite{FLL} and Yu et al. \cite{YT} obtained $\lambda$-central $\mathrm{BMO}$ estimates for commutators of some classical operators. In 2022, Wei \cite{W1} introduced the mixed central bounded mean oscillation space $\mathrm{CBMO}^{\vec{q}}(\mathbb{R}^{n})$, and gave a characterization of $\mathrm{CBMO}^{\vec{q}}(\mathbb{R}^{n})$ via the boundedness of the commutator of Hardy type operators on mixed Herz space. In the same year, Lu and Zhou \cite{LZ} introduced mixed $\lambda$-central bounded mean oscillation space $\mathrm{CBMO}^{\vec{q},\lambda}(\mathbb{R}^{n})$ and mixed $\lambda$-central Morrey space $\mathcal{B}^{\vec{q},\lambda}(\mathbb{R}^{n})$, and established the boundedness of the fractional integral operator and its commutator on this space.

The purpose of this paper will be to obtain a characterization of mixed $\lambda$-central bounded mean oscillation space $\mathrm{CBMO}^{\vec{q},\lambda}(\mathbb{R}^{n})$ via the boundedness of the commutators of $n$-dimensional Hardy operator $\mathcal{H}$ and its dual operator $\mathcal{H}^{*}$ on mixed Lebesgue space, and establish the boundedness of commutators $\mathcal{H}_{b}$ and $\mathcal{H}^{*}_{b}$ generated with $\mathrm{CBMO}^{\vec{q},\lambda}(\mathbb{R}^{n})$ function $b$ on mixed $\lambda$-central Morrey space $\mathcal{B}^{\vec{q},\lambda}(\mathbb{R}^{n})$, respectively.

This paper is organized as follows. Section 2 will present some basic definitions and preliminaries. In Section 3, by the estimates of commutators of the Hardy operator and its dual operator on mixed Lebesgue space, a characterization of the mixed $\lambda$-central bounded mean oscillation space will be given. Furthermore, we will establish the boundedness the commutators of the Hardy operator and its dual operator on mixed $\lambda$-central Morrey space in Section 4.

As a rule, for any set $E\subset\mathbb{R}^n$, ${\chi}_{E}$ denotes its characteristic function and $E^{c}$ denotes its complementary set, we also denote the Lebesgue measure by $|E|$. Let $\mathscr{M}(\mathbb{R}^{n})$ be the class of Lebesgue measurable functions on $\mathbb{R}^{n}$.  We use the symbol $f\lesssim g$ to denote there exists a positive constant $C$ such that $f\leq Cg $, and the notation $f\thickapprox g$ means that there exist positive constants $C_1, C_2$ such that $C_1 g\leq f\leq C_2g$. Throughout this paper, the letter $\vec{p}$ denotes $n$-tuples of the numbers in $(0,\infty]$, $n\geq1$, $\vec{p}=(p_{1},p_{2}, \cdots,p_{n})$, $0<\vec{p}\leq\infty$ means $0<p_{i}\leq\infty$ for all $i=1,2,\cdots,n$. Furthermore, for $1\leq\vec{p}\leq\infty$, let $$\frac{1}{\vec{p}}=(\frac{1}{p_{1}},\frac{1}{p_{2}},\cdots,\frac{1}{p_{n}}), \quad\vec{p}'=(p_{1}',p_{2}',\cdots,p_{n}'),$$
where $p_{i}'=\frac{p_{i}}{p_{i}-1} (i=1,2,\cdots,n)$ is the conjugate exponent of $p_i$.

\section{Some Definitions and Preliminaries}

In this section, we first give the definitions of commutators of the Hardy operator and its dual operator, and then recall the definitions of mixed function spaces as they relate to this paper.

The definitions of commutators of $n$-dimensional Hardy operators are given by Fu et al. \cite{FLW} in 2007. That is, let $b$ be a locally integrable function in $\mathbb{R}^{n}$, The commutators are defined by
$$\mathcal{H}_{b}f=b\mathcal{H}f-\mathcal{H}(fb)$$
and
$$\mathcal{H}^{*}_{b}f=b\mathcal{H}^{*}f-\mathcal{H}^{*}(fb)$$

Next, we recall the definition of mixed Lebesgue space $L^{\vec{p}}(\mathbb{R}^{n})$.

\hspace{-22pt}{\bf Definition 2.1}(\cite{BP}) {For $0<\vec{p}<\infty$, the mixed Lebesgue norm $\|\cdot\|_{L^{\vec{p}}(\mathbb{R}^{n})}$ is defined by
$$\|f\|_{L^{\vec{p}}(\mathbb{R}^{n})}:=\|f\|_{\vec{p}}:=\Bigg(\int_{\mathbb{R}}\cdots\bigg(\int_{\mathbb{R}}\big(\int_{\mathbb{R}}|f(x_{1},x_{2},\cdots,x_{n})|^{p_{1}}
dx_{1}\big)^{\frac{p_{2}}{p_{1}}}dx_{2}\bigg)^{\frac{p_{3}}{p_{2}}}\cdots dx_{n}\Bigg)^{\frac{1}{p_{n}}},$$
where $f:\mathbb{R}^{n}\rightarrow\mathbb{C}$ is a measurable function. If $p_{i}=\infty$, for some $i=1,\cdots,n$, we only need to make some suitable modifications. We define the mixed Lebesgue space $L^{\vec{p}}(\mathbb{R}^{n})$ to be the set of all $f\in\mathscr{M}(\mathbb{R}^{n})$ with $\|f\|_{L^{\vec{p}}(\mathbb{R}^{n})}<\infty$.
}

\quad\hspace{-22pt}{\bf Remark 2.1} {The mixed Lebesgue space $L^{\vec{p}}(\mathbb{R}^{n})$ reduces to the classical Lebesgue spaces
$L^{p}(\mathbb{R}^{n})$ when $p_{1}=p_{2}=\cdots=p_{n}=p$.
}

Before stating our results, let us recall the definitions of mixed $\lambda$-central bounded mean oscillation space $\mathrm{CBMO}^{\vec{q},\lambda}(\mathbb{R}^{n})$ and mixed $\lambda$-central Morrey space $\mathcal{B}^{\vec{q},\lambda}(\mathbb{R}^n)$.

\hspace{-22pt}{\bf Definition 2.2}{\it \; (\cite{LZ}) Let $1<\vec{q}<\infty$ and $\lambda<\frac{1}{n}$,
the mixed $\lambda$-central bounded mean oscillation space $\mathrm{CBMO}^{\vec{q},\lambda}(\mathbb{R}^{n})$ is defined by
$$\mathrm{CBMO}^{\vec{q},\lambda}(\mathbb{R}^{n}):=\{f\in \mathscr{M}(\mathbb{R}^{n}):\|f\|_{\mathrm{CBMO}^{\vec{q},\lambda}(\mathbb{R}^{n})}<\infty\},$$
where
$$\|f\|_{\mathrm{CBMO}^{\vec{q},\lambda}(\mathbb{R}^{n})}:=
\sup\limits_{r>0}\frac{\|(f-f_{B(0,r)})\chi_{B(0,r)}\|_{L^{\vec{q}}(\mathbb{R}^{n})}}{|B(0,r)|^{\lambda}\|\chi_{B(0,r)}\|_{L^{\vec{q}}(\mathbb{R}^{n})}}$$
and
$$f_{B(0,r)}:=\frac{1}{|B(0,r)|}\int_{B(0,r)}f(t)dt.$$
}

\quad\hspace{-22pt}{\bf Remark 2.2} {If $\lambda=0$, then we can easily get $\mathrm{CBMO}^{\vec{q}}(\mathbb{R}^{n})$ was given in \cite{W1}. The space $\mathrm{CBMO}^{\vec{q},\lambda}(\mathbb{R}^{n})$ is just the space $\mathrm{CBMO}^{q,\lambda}(\mathbb{R}^{n})$ was given in \cite{AL} when $\vec{q}=q$.
}

\hspace{-22pt} {\bf Definition 2.3}{\it \; (\cite{LZ}) Let $1<\vec{q}<\infty$ and $\lambda\in\mathbb{R}$,
the mixed $\lambda$-central Morrey space $\mathcal{B}^{\vec{q},\lambda}(\mathbb{R}^n)$ is defined by
$$\mathcal{B}^{\vec{q},\lambda}(\mathbb{R}^n):=\{f\in \mathscr{M}(\mathbb{R}^{n}):\|f\|_{\mathcal{B}^{\vec{q},\lambda}(\mathbb{R}^n)}<\infty\},$$
where
$$\|f\|_{\mathcal{B}^{\vec{q},\lambda}(\mathbb{R}^n)}:=\sup\limits_{r>0}\frac{\|f\chi_{B(0,r)}\|_{L^{\vec{q}}(\mathbb{R}^{n})}}{|B(0,r)|^{\lambda}\|\chi_{B(0,r)}\|_{L^{\vec{q}}(\mathbb{R}^{n})}}.$$
}

\quad\hspace{-22pt}{\bf Remark 2.3} {If $\vec{q}=q$, then the space $\mathcal{B}^{\vec{q},\lambda}(\mathbb{R}^n)$ reduces to the space $\mathcal{B}^{q,\lambda}(\mathbb{R}^n)$ was introduced in \cite{AL}.
}

Furthermore, we also needed some important lemmas in the process of proofs.
\begin{lemma}(\cite{W1})
 Let $1<\vec{p}<\infty$, Then
 $$\|\mathcal{H}f\|_{L^{\vec{p}}(\mathbb{R}^{n})}\lesssim\|f\|_{L^{\vec{p}}(\mathbb{R}^{n})}$$
 and
 $$\|\mathcal{H}^{*}f\|_{L^{\vec{p}}(\mathbb{R}^{n})}\lesssim\|f\|_{L^{\vec{p}}(\mathbb{R}^{n})}.$$
\end{lemma}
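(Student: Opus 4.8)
The plan is to avoid estimating the genuinely $n$-dimensional (radial) operators by hand and instead reduce both bounds to two robust facts: a pointwise domination of $\mathcal H$ by the Hardy–Littlewood maximal operator, and the duality $(L^{\vec p})^{*}=L^{\vec p'}$. First I would establish the pointwise bound $|\mathcal Hf(x)|\lesssim M(|f|)(x)$, where $M$ is the centered maximal operator over Euclidean balls. Assuming $f\ge 0$ (the general case following by replacing $f$ with $|f|$), write $|B(0,|x|)|=|B(0,1)|\,|x|^{n}$ and use $B(0,|x|)\subset B(x,2|x|)$ to get
\[
\mathcal Hf(x)=\frac{|B(0,1)|}{|B(0,|x|)|}\int_{B(0,|x|)}f(t)\,dt\le 2^{n}|B(0,1)|\,\frac{1}{|B(x,2|x|)|}\int_{B(x,2|x|)}f\le C_{n}\,Mf(x).
\]
Consequently $\|\mathcal Hf\|_{\vec p}\lesssim\|Mf\|_{\vec p}$, and everything is reduced to the boundedness of $M$ on $L^{\vec p}(\mathbb R^{n})$ for $1<\vec p<\infty$.

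For that maximal bound I would dominate $M$ by the strong (iterated) maximal operator $M_{s}=M_{1}\circ\cdots\circ M_{n}$, where $M_{i}$ is the one–dimensional maximal operator in the variable $x_{i}$; this follows by comparing the ball $B(x,r)$ with the cube $\prod_{i}(x_{i}-r,x_{i}+r)$. One then tries to bound $M_{s}$ on $L^{\vec p}$ level by level in the iterated norm. The innermost step, with $M_{1}$ acting in $x_{1}$, is immediate from the scalar maximal theorem together with monotonicity of the outer norms. The higher levels are where the real difficulty lies: to apply $M_{i}$ at level $i$ one must bound the inner $L^{p_{1}}_{x_{1}}\cdots L^{p_{i-1}}_{x_{i-1}}$–norm of a supremum of averages in $x_{i}$, and "the norm of the supremum" is not controlled by "the supremum of the norms." I would resolve this by invoking the vector–valued (Banach–lattice) Hardy–Littlewood maximal inequality, i.e.\ the boundedness of $M_{i}$ on $L^{p_{i}}(\mathbb R;X)$ with $X$ an iterated Lebesgue space and $1<p_{i}<\infty$, which is the standard mechanism behind maximal estimates on mixed–norm spaces.

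For the dual operator $\mathcal H^{*}$ no pointwise bound by $M$ is available, since mass located far from the origin is felt by $\mathcal H^{*}f(x)$ at small $|x|$ but not by $Mf(x)$; so I would argue by duality instead. Because $\mathcal H$ and $\mathcal H^{*}$ are mutually adjoint, $\int \mathcal Hf\,g=\int f\,\mathcal H^{*}g$, and because $(L^{\vec r})^{*}=L^{\vec r'}$ with $1<\vec r'<\infty$ whenever $1<\vec r<\infty$, the bound for $\mathcal H$ on $L^{\vec p'}$ already proved gives, for $g\in L^{\vec p}$,
\[
\|\mathcal H^{*}g\|_{\vec p}=\sup_{\|f\|_{\vec p'}\le1}\Big|\int f\,\mathcal H^{*}g\Big|=\sup_{\|f\|_{\vec p'}\le1}\Big|\int \mathcal Hf\,g\Big|\le\|g\|_{\vec p}\sup_{\|f\|_{\vec p'}\le1}\|\mathcal Hf\|_{\vec p'}\lesssim\|g\|_{\vec p}.
\]

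The main obstacle is therefore concentrated in the maximal step: the scalar domination of $\mathcal H$ and the duality transfer to $\mathcal H^{*}$ are routine, whereas the mixed–norm maximal inequality genuinely requires the vector–valued theory. As a remark, there is an alternative, more computational route — writing $\mathcal Hf(x)=\int_{|y|<1}f(|x|y)\,dy$, applying Minkowski's integral inequality, and using the dilation identity $\|f(\lambda\cdot)\|_{\vec p}=\lambda^{-\sum_{i}1/p_{i}}\|f\|_{\vec p}$ — which reduces matters to the radial integral $\int_{0}^{1}\rho^{\,n-1-\sum_{i}1/p_{i}}\,d\rho$, finite precisely because each $p_{i}>1$ forces $\sum_{i}1/p_{i}<n$. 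I would expect this route to be the harder one, however, because it leaves the angular average $\int_{S^{n-1}}\|f(|\cdot|\omega)\|_{\vec p}\,d\omega\lesssim\|f\|_{\vec p}$ to be controlled, and since mixed–norm spaces are not rotation invariant (so the usual polar–coordinate reduction fails) this angular estimate is itself the crux.
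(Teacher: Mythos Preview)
The paper does not supply its own proof of this lemma; it is quoted from Wei~\cite{W1} and used as a black box, so there is nothing in the present text to compare your argument against directly.

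Your argument is correct. The pointwise domination $|\mathcal Hf(x)|\le C_{n}\,M(|f|)(x)$ via the inclusion $B(0,|x|)\subset B(x,2|x|)$ is valid; the boundedness of the Hardy--Littlewood maximal operator on $L^{\vec p}(\mathbb R^{n})$ for $1<\vec p<\infty$ is a known theorem, and your identification of the vector-valued (Banach-lattice) maximal inequality as the engine behind the iterated bound $M\lesssim M_{1}\circ\cdots\circ M_{n}$ is exactly the right diagnosis of where the substance lies. The transfer to $\mathcal H^{*}$ via the adjoint identity and Lemma~2.2 is legitimate, and your observation that no pointwise maximal bound is available for $\mathcal H^{*}$ (so duality is genuinely needed) is correct.

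Your caution about the Minkowski-integral route is also well placed. After the substitution $t=|x|y$ the integrand $x\mapsto f(|x|y)$ is a \emph{radial} function of $x$ for each fixed $y$, and the mixed norm of a radial function does not reduce to a single one-dimensional integral in any uniform way; the rotation trick that rescues the classical $L^{p}$ computation is unavailable here. So that route really does hit the obstruction you describe, and your maximal-function argument is the cleaner path.
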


\begin{lemma}(\cite{BP})
 Let $1<\vec{p}<\infty$, Then $(L^{\vec{p}})^{*}=L^{\vec{p}'}$.
\end{lemma}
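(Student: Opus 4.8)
The plan is to prove the duality $(L^{\vec p})^{*}=L^{\vec p'}$ in two stages: first the isometric embedding $L^{\vec p'}\hookrightarrow (L^{\vec p})^{*}$ coming from a mixed H\"older inequality, and then the surjectivity of this embedding, which I would obtain by induction on the dimension $n$ through the duality theory of vector-valued (Bochner) $L^{p}$-spaces.

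First I would establish the mixed H\"older inequality
$$\int_{\mathbb{R}^{n}}|f(x)g(x)|\,dx\le\|f\|_{L^{\vec p}(\mathbb{R}^{n})}\,\|g\|_{L^{\vec p'}(\mathbb{R}^{n})}.$$
This is proved by peeling off the integrals one coordinate at a time: one applies the scalar H\"older inequality with exponents $(p_{1},p_{1}')$ to the innermost integral in $x_{1}$, then $(p_{2},p_{2}')$ to the next integral in $x_{2}$, and so on, using $1/p_{i}+1/p_{i}'=1$ at each step. Consequently every $g\in L^{\vec p'}$ defines a bounded linear functional $\Lambda_{g}(f)=\int_{\mathbb{R}^{n}}fg$ on $L^{\vec p}$ with $\|\Lambda_{g}\|\le\|g\|_{L^{\vec p'}}$, and a standard extremizer argument (normalizing $f$ stage by stage so that $fg$ behaves like $|g|^{p'}$ in the iterated sense) gives the matching lower bound $\|\Lambda_{g}\|\ge\|g\|_{L^{\vec p'}}$, so that $g\mapsto\Lambda_{g}$ is an isometry into $(L^{\vec p})^{*}$.

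For surjectivity I would induct on $n$. The base case $n=1$ is the classical Riesz representation theorem $(L^{p_{1}})^{*}=L^{p_{1}'}$, valid since $1<p_{1}<\infty$. For the inductive step, view $L^{\vec p}(\mathbb{R}^{n})$ as the Bochner space $L^{p_{n}}(\mathbb{R};E)$ with $E=L^{(p_{1},\dots,p_{n-1})}(\mathbb{R}^{n-1})$: a function $f$ is identified with the vector-valued map $x_{n}\mapsto f(\cdot,x_{n})\in E$, and the mixed norm is precisely the Bochner norm $\big(\int_{\mathbb{R}}\|f(\cdot,x_{n})\|_{E}^{p_{n}}\,dx_{n}\big)^{1/p_{n}}$. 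Since $1<p_{i}<\infty$ for every $i$, the inductive hypothesis gives $E^{*}=L^{(p_{1}',\dots,p_{n-1}')}(\mathbb{R}^{n-1})$ and, moreover, $E$ is reflexive and hence enjoys the Radon--Nikod\'ym property. The duality theorem for Bochner spaces then yields $(L^{p_{n}}(\mathbb{R};E))^{*}=L^{p_{n}'}(\mathbb{R};E^{*})=L^{\vec p'}(\mathbb{R}^{n})$, closing the induction.

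The step I expect to be the main obstacle is the rigorous identification of the mixed Lebesgue space with the iterated Bochner space: one must verify the strong measurability of the $E$-valued map $x_{n}\mapsto f(\cdot,x_{n})$ and confirm that the Radon--Nikod\'ym property is available at each stage, which is exactly where the hypothesis $1<\vec p<\infty$, forcing reflexivity, is essential. Once these measurability and Radon--Nikod\'ym points are in place, the remaining verifications are routine unwindings of the iterated norm.
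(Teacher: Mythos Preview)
Your proposal is correct. The paper itself does not prove this lemma; it is simply quoted with a citation to Benedek and Panzone \cite{BP}, so there is no in-paper argument to compare against. Your two-stage plan---the mixed H\"older inequality for the isometric embedding $L^{\vec p'}\hookrightarrow (L^{\vec p})^{*}$, followed by induction on $n$ via the Bochner-space duality $(L^{p_{n}}(\mathbb{R};E))^{*}=L^{p_{n}'}(\mathbb{R};E^{*})$ under the Radon--Nikod\'ym property---is a standard and valid route to this result. The technical point you flag (strong measurability of $x_{n}\mapsto f(\cdot,x_{n})$ and the identification of the mixed space with the iterated Bochner space) is genuine but routine once separability of the inner space $E$ and Fubini--Tonelli are in hand; no further obstacles arise, and the hypothesis $1<\vec p<\infty$ is used exactly where you say, to secure reflexivity at each stage.
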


\section{A characterization of $\mathrm{CBMO}^{\vec{q},\lambda}(\mathbb{R}^{n})$ via the boundedness of $\mathcal{H}_{b}$ and $\mathcal{H}^{*}_{b}$ on $L^{\vec{p}}(\mathbb{R}^{n})$}

In this section, we obtain a characterization of the mixed $\lambda$-central bounded mean oscillation space $\mathrm{CBMO}^{\vec{q},\lambda}(\mathbb{R}^{n})$ by boundedness of commutators $\mathcal{H}_{b}$ and $\mathcal{H}^{*}_{b}$ of the Hardy operator and its dual operator on mixed Lebesgue space $L^{\vec{p}}(\mathbb{R}^{n})$. Our main results can be formulated as follows.

\begin{theorem}
Let $1<\vec{p},\vec{q}<\infty$, $0\leq\lambda<\frac{1}{n}$ satisfy condition $\lambda=\frac{1}{n}\sum^{n}_{i=1}\frac{1}{p_{i}}-\frac{1}{n}\sum^{n}_{i=1}\frac{1}{q_{i}}$. Then both $\mathcal{H}_{b}$ and $\mathcal{H}^{*}_{b}$ are bounded from $L^{\vec{p}}(\mathbb{R}^{n})$ to $L^{\vec{q}}(\mathbb{R}^{n})$ if and only if
$$b\in \mathrm{CBMO}^{\vec{q},\lambda}(\mathbb{R}^{n})\cap \mathrm{CBMO}^{\vec{p}',\lambda}(\mathbb{R}^{n}).$$
\end{theorem}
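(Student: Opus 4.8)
The plan is to prove both implications by writing the commutators in kernel form,
$$\mathcal{H}_{b}f(x)=\frac{1}{|x|^{n}}\int_{|t|<|x|}(b(x)-b(t))f(t)\,dt,\qquad \mathcal{H}^{*}_{b}f(x)=\int_{|t|\ge|x|}\frac{(b(x)-b(t))f(t)}{|t|^{n}}\,dt,$$
and by using repeatedly the elementary scaling identities $\|\chi_{B(0,r)}\|_{L^{\vec{q}}(\mathbb{R}^{n})}\thickapprox r^{\sum_{i}1/q_{i}}$ and $|B(0,r)|\thickapprox r^{n}$, together with the hypothesis $n\lambda=\sum_{i}1/p_{i}-\sum_{i}1/q_{i}$. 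A preliminary observation I would record is that $\mathcal{H}$ and $\mathcal{H}^{*}$ are formal adjoints, so that $(\mathcal{H}_{b})^{*}=-\mathcal{H}^{*}_{b}$; combined with Lemma 2.2 this shows that boundedness of $\mathcal{H}_{b}\colon L^{\vec{p}}\to L^{\vec{q}}$ is equivalent to boundedness of $\mathcal{H}^{*}_{b}\colon L^{\vec{q}'}\to L^{\vec{p}'}$, a duality I will use to halve the work.

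For necessity I would argue with test functions centred at the origin. Fix $r>0$, set $a=\int_{r\le|t|<2r}|t|^{-n}\,dt\thickapprox 1$ and take $f_{r}=a^{-1}\chi_{B(0,2r)\setminus B(0,r)}$. For $x\in B(0,r)$ every $t$ in the support satisfies $|t|\ge r>|x|$, so a direct computation gives $\mathcal{H}^{*}_{b}f_{r}(x)=b(x)-c_{r}$ on $B(0,r)$, where $c_{r}=a^{-1}\int_{r\le|t|<2r}b(t)|t|^{-n}\,dt$ is a constant. Hence $\|(b-c_{r})\chi_{B(0,r)}\|_{L^{\vec{q}}}\le\|\mathcal{H}^{*}_{b}f_{r}\|_{L^{\vec{q}}}\lesssim\|f_{r}\|_{L^{\vec{p}}}\thickapprox r^{\sum_{i}1/p_{i}}\thickapprox|B(0,r)|^{\lambda}\|\chi_{B(0,r)}\|_{L^{\vec{q}}}$, the last step being exactly the scaling hypothesis. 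A short lemma, proved by mixed Hölder, shows that $\|(b-b_{B})\chi_{B}\|_{L^{\vec{q}}}\lesssim\|(b-c)\chi_{B}\|_{L^{\vec{q}}}$ for every constant $c$ (using $\|\chi_{B}\|_{L^{\vec{q}}}\|\chi_{B}\|_{L^{\vec{q}'}}\thickapprox|B|$), and this yields $b\in\mathrm{CBMO}^{\vec{q},\lambda}(\mathbb{R}^{n})$. Applying the identical argument to $\mathcal{H}^{*}_{b}\colon L^{\vec{q}'}\to L^{\vec{p}'}$ (available from $\mathcal{H}_{b}$ by the duality above) produces $b\in\mathrm{CBMO}^{\vec{p}',\lambda}(\mathbb{R}^{n})$.

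For sufficiency I would treat $\mathcal{H}_{b}$ (then $\mathcal{H}^{*}_{b}$ by duality). Decompose $\mathbb{R}^{n}$ into dyadic annuli $C_{k}=B(0,2^{k+1})\setminus B(0,2^{k})$ and write $B_{k}=B(0,2^{k})$. For $x\in C_{k}$ I split $b(x)-b(t)=(b(x)-b_{B_{k+1}})+(b_{B_{k+1}}-b_{B_{j+1}})+(b_{B_{j+1}}-b(t))$ for $t\in C_{j}$, $j\le k$, giving three contributions. On each annulus I apply Hölder in the mixed norm: the factor $b(x)-b_{B_{k+1}}$ is measured on the target side in $L^{\vec{q}}$ and is controlled by $\|b\|_{\mathrm{CBMO}^{\vec{q},\lambda}}$, whereas the factor $b_{B_{j+1}}-b(t)$ is paired with $f$ through the predual exponent and is controlled by $\|b\|_{\mathrm{CBMO}^{\vec{p}',\lambda}}$; the telescoping middle term is handled by the standard bound $|b_{B_{i+1}}-b_{B_{i}}|\lesssim\|b\|_{\ast}2^{in\lambda}$. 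This is precisely why both central spaces enter. After inserting the scaling relation $n\lambda=\sum_{i}1/p_{i}-\sum_{i}1/q_{i}$, every power of $2^{k}$ collapses and each contribution is dominated, on $C_{k}$, by $\|b\|_{\ast}\sum_{j\le k}2^{-(k-j)\gamma}\|f\chi_{C_{j}}\|_{L^{\vec{p}}}$ with $\gamma=\sum_{i}1/p_{i}'>0$ (respectively $\gamma=\sum_{i}1/q_{i}'>0$).

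The main obstacle is the final reassembly of these per-annulus estimates into the global norm $\|\mathcal{H}_{b}f\|_{L^{\vec{q}}}$. In the classical single-exponent case one uses that the $C_{k}$ are disjoint to replace the sum of norms by an $\ell^{q}$ sum, then Young's convolution inequality $\ell^{1}\ast\ell^{q}\to\ell^{q}$ and the embedding $\ell^{p}\hookrightarrow\ell^{q}$ (valid because $\lambda\ge0$ forces $p\le q$); the naive triangle inequality over $k$ is too lossy and must be avoided. In the mixed-norm setting the annuli $C_{k}$ are not rectangular, so $\|\sum_{k}g_{k}\chi_{C_{k}}\|_{L^{\vec{q}}}$ does not split exactly over $k$, and the crux of the proof is a careful mixed-norm summation lemma that plays the role of this disjointness. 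I expect this to be the hardest and most technical step, and I would isolate it as a separate lemma, proving it by iterating the layer-cake structure of $\|\cdot\|_{L^{\vec{q}}}$ together with Young's inequality, and using $0\le\lambda<\tfrac{1}{n}$ to guarantee both the requisite embedding and the convergence of the geometric series in $k-j$.
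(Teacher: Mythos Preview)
Your sufficiency argument follows the paper's almost verbatim: the paper uses the same dyadic annuli $C_k$, the same split $b(x)-b(t)=(b(x)-b_{B_k})+(b_{B_k}-b(t))$ (its $I_1,I_2$), the same telescoping bound (its Lemma~3.1, giving $I_{21},I_{22}$), and the same mixed H\"older estimates producing geometric factors $2^{(j-k)n(\lambda+\sum_i 1/p_i')}$. Where you diverge is precisely the reassembly you flag as the crux. The paper does \emph{not} prove a mixed-norm summation lemma; it simply writes $\|\mathcal{H}_b f\|_{\vec q}=\sum_k\|\chi_k\mathcal{H}_b f\|_{\vec q}$ at the start and $\sum_k\|f\chi_{C_k}\|_{\vec p}=\|f\|_{\vec p}$ at the end as equalities, and proceeds linearly. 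Neither identity holds in general (the first is only the triangle inequality, and the second fails already for a scalar exponent $p>1$). Your instinct that this step requires an honest $\ell^{q}$-over-annuli argument combined with Young's inequality and the embedding coming from $\lambda\ge 0$ is well founded, and is in fact more careful than what the paper presents.

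For necessity the routes genuinely differ. You test $\mathcal{H}^*_b$ on an annulus to obtain the exact identity $\mathcal{H}^*_b f_r=b-c_r$ on $B(0,r)$, then swap $c_r$ for $b_B$ and invoke duality to reach $\mathrm{CBMO}^{\vec p',\lambda}$. The paper instead tests both operators simultaneously on the full ball: it writes $(b(x)-b_B)\chi_B(x)=\frac{\chi_B(x)}{|B|}\int_B(b(x)-b(z))\,dz$, splits the integral into $\{|z|<|x|\}$ and $\{|z|\ge|x|\}$, and on $B$ bounds these pointwise by $|\mathcal{H}_b(\chi_B)(x)|$ and $|\mathcal{H}^*_b(\chi_B)(x)|$ respectively (using $|x|^n\le|B|$ and $|z|^n\le|B|$), so that the scaling hypothesis collapses $\|\chi_B\|_{\vec p}/(|B|^\lambda\|\chi_B\|_{\vec q})$ to a constant. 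The paper's version uses the two boundedness hypotheses symmetrically and avoids the constant-replacement and duality steps; your version is equally valid and has the advantage of needing only one operator bound at a time.
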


\quad\hspace{-22pt}{\bf Remark 3.1} {If $\mathcal{H}_{b}$ and $\mathcal{H}^{*}_{b}$ are bounded from $L^{\vec{p}}(\mathbb{R}^{n})$ to $L^{\vec{q}}(\mathbb{R}^{n})$, then by Lemma 2.2, we can deduce that both $\mathcal{H}_{b}$ and $\mathcal{H}^{*}_{b}$ map from $L^{\vec{q}'}(\mathbb{R}^{n})$ to $L^{\vec{p}'}(\mathbb{R}^{n})$.
}

The proof of Theorem 3.1 is based on the following a key lemma. For this purpose, we provide the details as follows.
Let
$$B_{k}=\{x\in\mathbb{R}^{n}: |x|\leq 2^{k}\}, C_{k}=B_{k}\backslash B_{k-1}, \chi_{k}=\chi_{C_{k}}, k\in\mathbb{Z},$$
where $\chi_{C_{k}}$ is the characteristic function of a set $C_{k}$. We start with this necessary lemma.

\begin{lemma}
 Let $1<\vec{q}<\infty$, $\lambda<\frac{1}{n}$ and $i, k\in\mathbb{Z}$. If $b\in \mathrm{CBMO}^{\vec{q},\lambda}(\mathbb{R}^{n})$, then
 $$|b(y)-b_{B_{k}}|\leq|b(y)-b_{B_{j}}|+C\frac{2^{n\lambda}}{|1-2^{n\lambda}|}|2^{kn\lambda}-2^{jn\lambda}|\|b\|_{\mathrm{CBMO}^{\vec{q},\lambda}(\mathbb{R}^{n})}.$$
\end{lemma}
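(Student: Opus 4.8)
The plan is to reduce the whole statement to the triangle inequality
$$|b(y)-b_{B_k}|\le |b(y)-b_{B_j}|+|b_{B_j}-b_{B_k}|,$$
which holds for every admissible $y$, so that the problem becomes a \emph{uniform} estimate for the difference of the two averages $|b_{B_j}-b_{B_k}|$. There is no loss of generality in assuming $j\le k$ (the case $j>k$ is symmetric, and the absolute values in the claimed bound absorb both orderings). I would then telescope along the chain of concentric dyadic balls $B_j\subset B_{j+1}\subset\cdots\subset B_k$, so that $|b_{B_j}-b_{B_k}|\le\sum_{m=j}^{k-1}|b_{B_m}-b_{B_{m+1}}|$, and it suffices to control one consecutive step and then sum a geometric series.

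For the per-step estimate I would use $B_m\subset B_{m+1}$ and $|B_{m+1}|\approx 2^n|B_m|$ to write
$$|b_{B_m}-b_{B_{m+1}}|\le\frac{1}{|B_m|}\int_{B_{m+1}}|b(t)-b_{B_{m+1}}|\,dt\le\frac{1}{|B_m|}\big\|(b-b_{B_{m+1}})\chi_{B_{m+1}}\big\|_{L^{\vec q}}\big\|\chi_{B_{m+1}}\big\|_{L^{\vec q'}},$$
where the last step is H\"older's inequality in the mixed-norm setting, justified by the duality $(L^{\vec q})^{*}=L^{\vec q'}$ of Lemma 2.2. Applying the definition of $\|b\|_{\mathrm{CBMO}^{\vec q,\lambda}(\mathbb{R}^{n})}$ bounds the first norm by $|B_{m+1}|^{\lambda}\|\chi_{B_{m+1}}\|_{L^{\vec q}}\|b\|_{\mathrm{CBMO}^{\vec q,\lambda}(\mathbb{R}^{n})}$. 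The crucial measure identity I would invoke is the scaling relation for characteristic functions of balls: dilating by $x\mapsto rx$ gives $\|\chi_{B(0,r)}\|_{L^{\vec q}}\approx r^{\sum_i 1/q_i}$ and $\|\chi_{B(0,r)}\|_{L^{\vec q'}}\approx r^{\sum_i 1/q_i'}$, hence $\|\chi_{B(0,r)}\|_{L^{\vec q}}\|\chi_{B(0,r)}\|_{L^{\vec q'}}\approx r^{\,n}\approx|B(0,r)|$ since $1/q_i+1/q_i'=1$. This collapses the product of the two norms into $|B_{m+1}|$, leaving $|b_{B_m}-b_{B_{m+1}}|\lesssim |B_{m+1}|^{\lambda}\,\|b\|_{\mathrm{CBMO}^{\vec q,\lambda}(\mathbb{R}^{n})}\approx 2^{(m+1)n\lambda}\,\|b\|_{\mathrm{CBMO}^{\vec q,\lambda}(\mathbb{R}^{n})}$.

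Summing the geometric series with ratio $2^{n\lambda}$ then produces the closed form
$$\sum_{m=j}^{k-1}2^{(m+1)n\lambda}=2^{n\lambda}\,\frac{2^{kn\lambda}-2^{jn\lambda}}{2^{n\lambda}-1},$$
which, after taking absolute values to cover both orderings of $j$ and $k$, yields exactly the factor $\tfrac{2^{n\lambda}}{|1-2^{n\lambda}|}\,|2^{kn\lambda}-2^{jn\lambda}|$ in the statement; combining with the opening triangle inequality finishes the proof. I expect the two mixed-norm facts to be the only real obstacles: the mixed H\"older inequality (cleanly supplied by Lemma 2.2) and, more importantly, the scaling identity $\|\chi_{B(0,r)}\|_{L^{\vec q}}\|\chi_{B(0,r)}\|_{L^{\vec q'}}\approx|B(0,r)|$, which is what makes the anisotropic exponents reassemble into the isotropic factor $r^{n}$; here one must note that $B(0,r)$ is a Euclidean ball, so the dilation argument applies verbatim. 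A minor caveat is that the displayed closed form presupposes $2^{n\lambda}\ne 1$, i.e. $\lambda\ne 0$; the degenerate case $\lambda=0$ (where the sum is simply $(k-j)$ terms) can be stated separately or treated as the limiting form of the same bound.
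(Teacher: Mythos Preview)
Your proposal is correct and follows essentially the same route as the paper: triangle inequality to reduce to $|b_{B_j}-b_{B_k}|$, telescoping along consecutive dyadic balls, the one-step estimate via mixed H\"older and the scaling identity $\|\chi_{B_{m+1}}\|_{\vec q}\|\chi_{B_{m+1}}\|_{\vec q'}\approx|B_{m+1}|$, and then summing the geometric series. The only differences are cosmetic---you handle both orderings of $j,k$ at once with absolute values (the paper writes out the two cases separately) and you flag the degenerate case $\lambda=0$, which the paper leaves implicit.
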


\begin{proof}[Proof]
By the H\"{o}lder inequality on mixed Lebesgue space (see \cite{BP}), we get
\begin{align*}
|b_{B_{i}}-b_{B_{i+1}}|&\leq\frac{1}{|B_{i}|}\int_{B_{i}}|b(y)-b_{B_{i+1}}|dy\\
&\leq\frac{1}{|B_{i}|}\|\chi_{B_{i+1}}(b-b_{B_{i+1}})\|_{1}\\
&\leq\frac{1}{|B_{i}|}\|\chi_{B_{i+1}}\|_{\vec{q}'}\|(b-b_{B_{i+1}})\chi_{B_{i+1}}\|_{\vec{q}}\\
&\leq\frac{|B_{i+1}|^{\lambda}}{|B_{i}|}\|\chi_{B_{i+1}}\|_{\vec{q}'}\|\chi_{B_{i+1}}\|_{\vec{q}}\|b\|_{\mathrm{CBMO}^{\vec{q},\lambda}(\mathbb{R}^{n})}\\
&\leq C2^{(i+1)n\lambda}\|b\|_{\mathrm{CBMO}^{\vec{q},\lambda}(\mathbb{R}^{n})}.
\end{align*}

If $k<j$, then
\begin{align*}
|b(y)-b_{B_{k}}|&\leq|b(y)-b_{B_{j}}|+|b_{B_{k}}-b_{B_{j}}|\\
&\leq|b(y)-b_{B_{j}}|+\sum^{j-1}_{i=k}|b_{B_{i}}-b_{B_{i+1}}|\\
&\leq|b(y)-b_{B_{j}}|+C2^{n\lambda}(1-2^{n\lambda})^{-1}(2^{kn\lambda}-2^{jn\lambda})\|b\|_{\mathrm{CBMO}^{\vec{q},\lambda}(\mathbb{R}^{n})}.
\end{align*}

If $j<k$, then
\begin{align*}
|b(y)-b_{B_{k}}|&\leq|b(y)-b_{B_{j}}|+\sum^{k-1}_{i=j}|b_{B_{i}}-b_{B_{i+1}}|\\
&\leq|b(y)-b_{B_{j}}|+C2^{n\lambda}(1-2^{n\lambda})^{-1}(2^{jn\lambda}-2^{kn\lambda})\|b\|_{\mathrm{CBMO}^{\vec{q},\lambda}(\mathbb{R}^{n})}.
\end{align*}
The proof is completed.
\end{proof}

Let us now give the proof of Theorem 3.1.

\begin{proof}[Proof]
Let $f$ be a function in $L^{\vec{p}}(\mathbb{R}^n)$, $b\in \mathrm{CBMO}^{\vec{q},\lambda}(\mathbb{R}^{n})\cap \mathrm{CBMO}^{\vec{p}',\lambda}(\mathbb{R}^{n})$. For simplicity, we write
$$\sum^{\infty}_{j=-\infty}f(x)\chi_{j}(x)=\sum^{\infty}_{j=-\infty}f_{j}(x),$$
and
$$\|\mathcal{H}_{b}f\|_{\vec{q}}=\sum^{\infty}_{k=-\infty}\|\chi_{B_{k}\backslash B_{k-1}}\mathcal{H}_{b}f\|_{\vec{q}}=\sum^{\infty}_{k=-\infty}\|\chi_{k}\mathcal{H}_{b}f\|_{\vec{q}}.$$
By the Minkowski inequality on mixed Lebesgue space, we have
\begin{align*}
\|\chi_{k}\mathcal{H}_{b}f\|_{\vec{q}}&=\Big\|\frac{\chi_{k}(\cdot)}{|\cdot|^{n}}\int_{B(0,|\cdot|)}\big(b(\cdot)-b(y)\big)f(y)dy\Big\|_{\vec{q}}\\
&\leq\Big\|\frac{\chi_{k}(\cdot)}{|\cdot|^{n}}\sum^{k}_{j=-\infty}\int_{C_{j}}\big|b(\cdot)-b(y)\big|\big|f(y)\big|dy\Big\|_{\vec{q}}\\
&\lesssim2^{-kn}\Big\|\chi_{k}(\cdot)\sum^{k}_{j=-\infty}\int_{C_{j}}\big|b(\cdot)-b(y)\big|\big|f(y)\big|dy\Big\|_{\vec{q}}\\
&\lesssim2^{-kn}\Big\|\chi_{k}(\cdot)\sum^{k}_{j=-\infty}\int_{C_{j}}\big|b(\cdot)-b_{B_{k}}\big|\big|f(y)\big|dy\Big\|_{\vec{q}}\\
&\quad+2^{-kn}\Big\|\chi_{k}(\cdot)\sum^{k}_{j=-\infty}\int_{C_{j}}\big|b(y)-b_{B_{k}}\big|\big|f(y)\big|dy\Big\|_{\vec{q}}\\
&:=I_{1}+I_{2}.
\end{align*}

We first estimate $I_{1}$. By the H\"{o}lder inequality on mixed Lebesgue space, we obtain
$$\int_{C_{j}}\big|b(x)-b_{B_{k}}\big|\big|f(y)\big|dy\leq\big|b(x)-b_{B_{k}}\big|\|\chi_{j}\|_{\vec{p}'}\|f_{j}\|_{\vec{p}},$$
then
\begin{align*}
I_{1}&\leq2^{-kn}\|(b-b_{B_{k}})\chi_{B_{k}}\|_{\vec{q}}\sum^{k}_{j=-\infty}\|\chi_{j}\|_{\vec{p}'}\|f_{j}\|_{\vec{p}}\\
&\leq2^{-kn(1-\lambda)}\|b\|_{\mathrm{CBMO}^{\vec{q},\lambda}(\mathbb{R}^{n})}\|\chi_{B_{k}}\|_{\vec{q}}\sum^{k}_{j=-\infty}\|\chi_{B_{j}}\|_{\vec{p}'}\|f_{j}\|_{\vec{p}}.
\end{align*}

Next we estimate $I_2$. It follows from Lemma 3.1 that
\begin{align*}
I_{2}&\lesssim2^{-kn}\Big\|\chi_{k}(\cdot)\sum^{k}_{j=-\infty}\int_{C_{j}}|b(y)-b_{B_{j}}||f(y)|dy\Big\|_{\vec{q}}\\
&\quad+2^{-kn}\|b\|_{\mathrm{CBMO}^{\vec{q},\lambda}(\mathbb{R}^{n})}\Big\|\chi_{k}(\cdot)\sum^{k}_{j=-\infty}
\int_{C_{j}}\frac{2^{n\lambda}}{1-2^{n\lambda}}(2^{jn\lambda}-2^{kn\lambda})|f(y)|dy\Big\|_{\vec{q}}\\
&:=I_{21}+I_{22}.
\end{align*}
The H\"{o}lder inequality on mixed Lebesgue space deduces that
\begin{align*}
I_{21}&\leq2^{-kn}\Big\|\chi_{B_{k}}(\cdot)\sum^{k}_{j=-\infty}\int_{C_{j}}|b(y)-b_{B_{j}}||f(y)|dy\Big\|_{\vec{q}}\\
&\leq2^{-kn}\|\chi_{B_{k}}\|_{\vec{q}}\sum^{k}_{j=-\infty}\|(b-b_{B_{j}})\chi_{B_{j}}\|_{\vec{p}'}\|f_{j}\|_{\vec{p}}\\
&\leq2^{-kn}\|b\|_{\mathrm{CBMO}^{\vec{p}',\lambda}(\mathbb{R}^{n})}\|\chi_{B_{k}}\|_{\vec{q}}\sum^{k}_{j=-\infty}2^{jn\lambda}\|\chi_{B_{j}}\|_{\vec{p}'}\|f_{j}\|_{\vec{p}},\\
\end{align*}
and
\begin{align*}
I_{22}&\leq2^{-kn}\|b\|_{\mathrm{CBMO}^{\vec{q},\lambda}(\mathbb{R}^{n})}\Big\|\chi_{B_{k}}(\cdot)
\sum^{k}_{j=-\infty}\frac{2^{n\lambda}}{1-2^{n\lambda}}(2^{jn\lambda}-2^{kn\lambda})\int_{C_{j}}|f(y)|dy\Big\|_{\vec{q}}\\
&\leq2^{-kn}\|b\|_{\mathrm{CBMO}^{\vec{q},\lambda}(\mathbb{R}^{n})}\Big\|\chi_{B_{k}}(\cdot)
\sum^{k}_{j=-\infty}\frac{2^{n\lambda}}{1-2^{n\lambda}}(2^{jn\lambda}-2^{kn\lambda})\|\chi_{j}\|_{\vec{p}'}\|f_{j}\|_{\vec{p}}\Big\|_{\vec{q}}\\
&\leq2^{-kn}\|b\|_{\mathrm{CBMO}^{\vec{q},\lambda}(\mathbb{R}^{n})}\|\chi_{B_{k}}\|_{\vec{q}}
\sum^{k}_{j=-\infty}\frac{2^{n\lambda}}{1-2^{n\lambda}}(2^{jn\lambda}-2^{kn\lambda})\|\chi_{B_{j}}\|_{\vec{p}'}\|f_{j}\|_{\vec{p}}.
\end{align*}
By the H\"{o}lder inequality, the estimates of $I_{1}$, $I_{21}$ and $I_{22}$ lead to
\begin{align*}
\|\mathcal{H}_{b}f\|_{\vec{q}}&=\sum^{\infty}_{k=-\infty}\|\chi_{k}\mathcal{H}_{b}f\|_{\vec{q}}\\
&\lesssim\|b\|_{\mathrm{CBMO}^{\vec{q},\lambda}(\mathbb{R}^{n})}\sum^{\infty}_{k=-\infty}2^{-kn(1-\lambda)}\|\chi_{B_{k}}\|_{\vec{q}}
\sum^{k}_{j=-\infty}\|\chi_{B_{j}}\|_{\vec{p}'}\|f_{j}\|_{\vec{p}}\\
&\quad+\|b\|_{\mathrm{CBMO}^{\vec{p}',\lambda}(\mathbb{R}^{n})}\sum^{\infty}_{k=-\infty}2^{-kn}\|\chi_{B_{k}}\|_{\vec{q}}
\sum^{k}_{j=-\infty}2^{jn\lambda}\|\chi_{B_{j}}\|_{\vec{p}'}\|f_{j}\|_{\vec{p}}\\
&\quad+\|b\|_{\mathrm{CBMO}^{\vec{q},\lambda}(\mathbb{R}^{n})}\sum^{\infty}_{k=-\infty}2^{-kn}\|\chi_{B_{k}}\|_{\vec{q}}
\sum^{k}_{j=-\infty}\frac{2^{n\lambda}(2^{jn\lambda}-2^{kn\lambda})}{1-2^{n\lambda}}\|\chi_{B_{j}}\|_{\vec{p}'}\|f_{j}\|_{\vec{p}}
\end{align*}
\begin{align*}
&\lesssim\big(\|b\|_{\mathrm{CBMO}^{\vec{p}',\lambda}(\mathbb{R}^{n})}+\|b\|_{\mathrm{CBMO}^{\vec{q},\lambda}(\mathbb{R}^{n})}\big)\sum^{\infty}_{k=-\infty}\|f_{k}\|_{\vec{p}}\\
&\quad+\|b\|_{\mathrm{CBMO}^{\vec{q},\lambda}(\mathbb{R}^{n})}\sum^{\infty}_{k=-\infty}2^{-kn(1-\frac{1}{n}\sum^{n}_{i=1}\frac{1}{q_{i}})}
\sum^{k}_{j=-\infty}\frac{2^{n\lambda}(2^{jn\lambda}-2^{kn\lambda})}{1-2^{n\lambda}}2^{jn\cdot\frac{1}{n}\sum^{n}_{i=1}\frac{1}{p_{i}'}}\|f_{j}\|_{\vec{p}}\\
&:=N_{1}+N_{2},
\end{align*}
where
$$\|\chi_{B_{k}}\|_{\vec{q}}\thickapprox2^{kn\cdot\frac{1}{n}\sum^{n}_{i=1}\frac{1}{q_{i}}},$$
and it follows from
$$\lambda+\frac{1}{n}\sum^{n}_{i=1}\frac{1}{p_{i}'}=1-\frac{1}{n}\sum^{n}_{i=1}\frac{1}{q_{i}}>0$$
that
\begin{align*}
\sum^{k}_{j=-\infty}2^{jn\lambda}\|\chi_{B_{j}}\|_{\vec{p}'}\|f_{j}\|_{\vec{p}}
&\leq[\sum^{k}_{j=-\infty}(2^{jn\lambda}\|\chi_{B_{j}}\|_{\vec{p}'})^{s}]^{\frac{1}{s}}[\sum^{k}_{j=-\infty}(\|f_{j}\|_{\vec{p}})^{s'}]^{\frac{1}{s'}}\\
&\leq(\sum^{k}_{j=-\infty}2^{jn\lambda}\|\chi_{B_{j}}\|_{\vec{p}'})(\sum^{k}_{j=-\infty}\|f_{j}\|_{\vec{p}})\\
&\leq\|f_{k}\|_{\vec{p}}\sum^{k}_{j=-\infty}2^{jn\lambda}\|\chi_{B_{j}}\|_{\vec{p}'}
\end{align*}
for any $1<s<\infty$ and $\frac{1}{s}+\frac{1}{s'}=1$.

For $N_{1}$, we have
\begin{align*}
N_{1}&=\big(\|b\|_{\mathrm{CBMO}^{\vec{p}',\lambda}(\mathbb{R}^{n})}+\|b\|_{\mathrm{CBMO}^{\vec{q},\lambda}(\mathbb{R}^{n})}\big)\sum^{\infty}_{k=-\infty}\|f\chi_{B_{k}\backslash B_{k-1}}\|_{\vec{p}}\\
&=\big(\|b\|_{\mathrm{CBMO}^{\vec{p}',\lambda}(\mathbb{R}^{n})}+\|b\|_{\mathrm{CBMO}^{\vec{q},\lambda}(\mathbb{R}^{n})}\big)\|f\|_{\vec{p}}.
\end{align*}
For $N_{2}$, using the H\"{o}lder inequality, we obtain
\begin{align*}
N_{2}&=\|b\|_{\mathrm{CBMO}^{\vec{q},\lambda}(\mathbb{R}^{n})}\sum^{\infty}_{k=-\infty}2^{-kn(1-\frac{1}{n}\sum^{n}_{i=1}\frac{1}{q_{i}})}\sum^{k}_{j=-\infty}
\frac{2^{n\lambda}(2^{jn\lambda}-2^{kn\lambda})}{1-2^{n\lambda}}\cdot2^{jn\cdot\frac{1}{n}\sum^{n}_{i=1}\frac{1}{p_{i}'}}\|f_{j}\|_{\vec{p}}\\
&\leq\|b\|_{\mathrm{CBMO}^{\vec{q},\lambda}(\mathbb{R}^{n})}\sum^{\infty}_{k=-\infty}\|f_{k}\|_{\vec{p}}\frac{2^{n\lambda}(2^{-kn(1-\frac{1}{n}\sum^{n}_{i=1}\frac{1}{q_{i}})})}{1-2^{n\lambda}}
\sum^{k}_{j=-\infty}(2^{jn\lambda}-2^{kn\lambda})2^{jn\cdot\frac{1}{n}\sum^{n}_{i=1}\frac{1}{p_{i}'}}\\
&\lesssim\|b\|_{\mathrm{CBMO}^{\vec{q},\lambda}(\mathbb{R}^{n})}\sum^{\infty}_{k=-\infty}\|f\chi_{B_{k}\backslash B_{k-1}}\|_{\vec{p}}\\
&\leq\big(\|b\|_{\mathrm{CBMO}^{\vec{p}',\lambda}(\mathbb{R}^{n})}+\|b\|_{\mathrm{CBMO}^{\vec{q},\lambda}(\mathbb{R}^{n})}\big)\|f\|_{\vec{p}},
\end{align*}
where we used the fact that
$$\lambda+\frac{1}{n}\sum^{n}_{i=1}\frac{1}{p_{i}'}=1-\frac{1}{n}\sum^{n}_{i=1}\frac{1}{q_{i}}>0.$$
Therefore, $\mathcal{H}_{b}$ is bounded from $L^{\vec{p}}(\mathbb{R}^{n})$ to $L^{\vec{q}}(\mathbb{R}^{n})$.

In a similar way, we can prove the boundedness of $\mathcal{H}^{*}_{b}$ from $L^{\vec{p}}(\mathbb{R}^{n})$ to $L^{\vec{q}}(\mathbb{R}^{n})$.

Conversely, suppose that $\mathcal{H}_{b}$ and $\mathcal{H}^{*}_{b}$ are bounded from $L^{\vec{p}}(\mathbb{R}^{n})$ to $L^{\vec{q}}(\mathbb{R}^{n})$, we will show that
$$b\in \mathrm{CBMO}^{\vec{p}',\lambda}(\mathbb{R}^{n})\cap \mathrm{CBMO}^{\vec{q},\lambda}(\mathbb{R}^{n}).$$

We consider only the situation when $b\in \mathrm{CBMO}^{\vec{q},\lambda}(\mathbb{R}^{n})$. Actually, we know that both $\mathcal{H}_{b}$ and $\mathcal{H}^{*}_{b}$ map $L^{\vec{q}'}(\mathbb{R}^{n})$ into $L^{\vec{p}'}(\mathbb{R}^{n})$ via Remark 3.1. Therefore, a similar procedure for all $b\in \mathrm{CBMO}^{\vec{p}',\lambda}(\mathbb{R}^{n})$.

For any fixed $r>0$, denote $B(0,r)$ by $B$. Since $\mathcal{H}_{b}$ and $\mathcal{H}^{*}_{b}$ are bounded from $L^{\vec{p}}(\mathbb{R}^{n})$ to $L^{\vec{q}}(\mathbb{R}^{n})$, we obtain
\begin{align*}
\frac{\|(b-b_{B(0,r)})\chi_{B(0,r)}\|_{\vec{q}}}{|B(0,r)|^{\lambda}\|\chi_{B(0,r)}\|_{\vec{q}}}&=
\frac{1}{|B|^{\lambda}\|\chi_{B}\|_{\vec{q}}}\Big\|\big(b(\cdot)-\frac{1}{|B|}\int_{B}b(z)dz\big)\chi_{B}(\cdot)\Big\|_{\vec{q}}\\
&=\frac{1}{|B|^{\lambda}\|\chi_{B}\|_{\vec{q}}}\Big\|\frac{\chi_{B}(\cdot)}{|B|}\int_{B}\big(b(\cdot)-b(z)\big)dz\Big\|_{\vec{q}}\\
&\leq\frac{1}{|B|^{\lambda}\|\chi_{B}\|_{\vec{q}}}\Big\|\frac{\chi_{B}(\cdot)}{|B|}\int_{|z|<|\cdot|}\big(b(\cdot)-b(z)\big)\chi_{B}(z)dz\Big\|_{\vec{q}}\\
&\quad+\frac{1}{|B|^{\lambda}\|\chi_{B}\|_{\vec{q}}}\Big\|\frac{\chi_{B}(\cdot)}{|B|}\int_{|z|\geq|\cdot|}\big(b(\cdot)-b(z)\big)\chi_{B}(z)dz\Big\|_{\vec{q}}\\
&\leq\frac{1}{|B|^{\lambda}\|\chi_{B}\|_{\vec{q}}}\Big\|\frac{\chi_{B}(\cdot)}{|B|}|\cdot|^{n}\frac{1}{|\cdot|^{n}}\int_{|z|<|\cdot|}\big(b(\cdot)-b(z)\big)\chi_{B}(z)dz\Big\|_{\vec{q}}\\
&\quad+\frac{1}{|B|^{\lambda}\|\chi_{B}\|_{\vec{q}}}\Big\|\frac{\chi_{B}(\cdot)}{|B|}\int_{|z|\geq|\cdot|}\frac{\big(b(\cdot)-b(z)\big)\chi_{B}(z)}{|z|^{n}}|z|^{n}dz\Big\|_{\vec{q}}\\
&\leq\frac{C}{|B|^{\lambda}\|\chi_{B}\|_{\vec{q}}}\Big\|\chi_{B}(\cdot)\mathcal{H}_{b}(\chi_{B})(\cdot)\Big\|_{\vec{q}}
+\frac{C}{|B|^{\lambda}\|\chi_{B}\|_{\vec{q}}}\Big\|\chi_{B}(\cdot)\mathcal{H}^{*}_{b}(\chi_{B})(\cdot)\Big\|_{\vec{q}}\\
&\leq\frac{C}{|B|^{\lambda}\|\chi_{B}\|_{\vec{q}}}\Big\|\mathcal{H}_{b}(\chi_{B})\Big\|_{\vec{q}}
+\frac{C}{|B|^{\lambda}\|\chi_{B}\|_{\vec{q}}}\Big\|\mathcal{H}^{*}_{b}(\chi_{B})\Big\|_{\vec{q}}\\
&\leq\frac{C}{|B|^{\lambda}\|\chi_{B}\|_{\vec{q}}}\|\chi_{B}\|_{\vec{p}}\\
&\leq C,
\end{align*}
where we used the condition
$$\lambda=\frac{1}{n}\sum^{n}_{i=1}\frac{1}{p_{i}}-\frac{1}{n}\sum^{n}_{i=1}\frac{1}{q_{i}}.$$
The proof of Theorem 3.1 is finished.
\end{proof}

\section{The $\mathrm{CBMO}^{\vec{q},\lambda}(\mathbb{R}^{n})$ estimates of the Hardy operators $\mathcal{H}$ and $\mathcal{H}^{*}$ on $\mathcal{B}^{\vec{q},\lambda}(\mathbb{R}^{n})$}
In this section, we will establish the boundedness of commutators $\mathcal{H}_{b}f$ and $\mathcal{H}^{*}_{b}f$ of the Hardy operator and its dual operator generated with mixed $\lambda$-central $\mathrm{BMO}$ function $b$ on mixed $\lambda$-central Morrey space $\mathcal{B}^{\vec{q},\lambda}(\mathbb{R}^{n})$, respectively. Our main results can be stated as follows.

\begin{theorem}
Let $1<\vec{p_{1}},\vec{q}<\infty$, $\vec{p_{1}}'<\vec{p_{2}}<\infty$, $\frac{1}{\vec{q}}=\frac{1}{\vec{p_{1}}}+\frac{1}{\vec{p_{2}}}$, $-\frac{1}{n}\sum^{n}_{i=1}\frac{1}{q_{i}}<\lambda<0$, $0\leq\lambda_{2}<\frac{1}{n}$ and $\lambda=\lambda_{1}+\lambda_{2}$. If $b\in \mathrm{CBMO}^{\vec{p_{2}},\lambda_{2}}(\mathbb{R}^{n})$, then $\mathcal{H}_{b}$ is bounded from $\mathcal{B}^{\vec{p_{1}},\lambda_{1}}(\mathbb{R}^{n})$ to $\mathcal{B}^{\vec{q},\lambda}(\mathbb{R}^{n})$ satisfies the following inequality:
$$\|\mathcal{H}_{b}f\|_{\mathcal{B}^{\vec{q},\lambda}(\mathbb{R}^{n})}\lesssim\|b\|_{\mathrm{CBMO}^{\vec{p_{2}},\lambda_{2}}(\mathbb{R}^{n})}
\|f\|_{\mathcal{B}^{\vec{p_{1}},\lambda_{1}}(\mathbb{R}^{n})}.$$
\end{theorem}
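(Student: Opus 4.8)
The plan is to exploit the fact that the target norm $\|\cdot\|_{\mathcal B^{\vec q,\lambda}(\mathbb R^n)}$ is itself a supremum over balls centered at the origin, so that the whole estimate can be localized to a single ball and no global summation is required. First I would reduce the supremum over all radii to dyadic radii: for $2^{k-1}<r\le 2^k$ one has $B(0,r)\subseteq B_k$, while $|B(0,r)|^{\lambda}\|\chi_{B(0,r)}\|_{\vec q}\thickapprox|B_k|^{\lambda}\|\chi_{B_k}\|_{\vec q}$ and, by monotonicity of the mixed norm, $\|\chi_{B(0,r)}\mathcal H_b f\|_{\vec q}\le\|\chi_{B_k}\mathcal H_b f\|_{\vec q}$; hence it suffices to bound
$$\frac{\|\chi_{B_k}\mathcal H_b f\|_{\vec q}}{|B_k|^{\lambda}\|\chi_{B_k}\|_{\vec q}}$$
uniformly in $k\in\mathbb Z$. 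Fixing $k$ and writing $B=B_k$, I would use the pointwise identity, valid for $x\in B$,
$$\mathcal H_b f(x)=\big(b(x)-b_{B}\big)\mathcal H f(x)-\mathcal H\big((b-b_{B})f\big)(x),$$
which splits $\|\chi_{B}\mathcal H_b f\|_{\vec q}\le J_1+J_2$ with $J_1=\|(b-b_{B})\chi_{B}\,\mathcal Hf\|_{\vec q}$ and $J_2=\|\chi_{B}\,\mathcal H((b-b_{B})f)\|_{\vec q}$.

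Next I would estimate each piece by the generalized H\"older inequality on mixed Lebesgue spaces with $\frac1{\vec q}=\frac1{\vec p_2}+\frac1{\vec p_1}$, together with one crucial localization observation: since $\mathcal Hg(x)=|x|^{-n}\int_{|t|<|x|}g(t)\,dt$ only sees $g$ on $B$ when $x\in B$, we have $\chi_{B}\mathcal Hg=\chi_{B}\mathcal H(g\chi_{B})$. For $J_1$, H\"older gives $J_1\le\|(b-b_{B})\chi_{B}\|_{\vec p_2}\,\|\chi_{B}\mathcal Hf\|_{\vec p_1}$; the first factor is at most $|B|^{\lambda_2}\|\chi_{B}\|_{\vec p_2}\|b\|_{\mathrm{CBMO}^{\vec p_2,\lambda_2}(\mathbb R^n)}$ by the definition of the $\mathrm{CBMO}$ norm, while the localization identity, Lemma 2.1 on $L^{\vec p_1}$ and the definition of $\mathcal B^{\vec p_1,\lambda_1}$ give $\|\chi_{B}\mathcal Hf\|_{\vec p_1}=\|\chi_{B}\mathcal H(f\chi_{B})\|_{\vec p_1}\lesssim\|f\chi_{B}\|_{\vec p_1}\le|B|^{\lambda_1}\|\chi_{B}\|_{\vec p_1}\|f\|_{\mathcal B^{\vec p_1,\lambda_1}(\mathbb R^n)}$. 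For $J_2$ the same localization identity and Lemma 2.1 on $L^{\vec q}$ yield $J_2\lesssim\|(b-b_{B})f\chi_{B}\|_{\vec q}$, after which the identical H\"older split, $\mathrm{CBMO}$ bound, and Morrey bound for $f$ produce exactly the same majorant as for $J_1$.

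Finally I would collect the estimates. Both $J_1$ and $J_2$ are bounded by a constant times $|B|^{\lambda_1+\lambda_2}\|\chi_{B}\|_{\vec p_1}\|\chi_{B}\|_{\vec p_2}\,\|b\|_{\mathrm{CBMO}^{\vec p_2,\lambda_2}(\mathbb R^n)}\|f\|_{\mathcal B^{\vec p_1,\lambda_1}(\mathbb R^n)}$; using $\lambda=\lambda_1+\lambda_2$ and the comparison $\|\chi_{B_k}\|_{\vec p_1}\|\chi_{B_k}\|_{\vec p_2}\thickapprox 2^{k\sum_i 1/p_{1,i}}2^{k\sum_i 1/p_{2,i}}=2^{k\sum_i 1/q_i}\thickapprox\|\chi_{B_k}\|_{\vec q}$ (which is precisely where $\frac1{\vec q}=\frac1{\vec p_1}+\frac1{\vec p_2}$ enters), this majorant equals a constant times $|B|^{\lambda}\|\chi_{B}\|_{\vec q}\|b\|_{\mathrm{CBMO}^{\vec p_2,\lambda_2}(\mathbb R^n)}\|f\|_{\mathcal B^{\vec p_1,\lambda_1}(\mathbb R^n)}$; dividing through and taking the supremum over $k$ finishes the proof. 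The hypothesis $\vec p_1'<\vec p_2$ is used exactly to force $\frac1{\vec q}=\frac1{\vec p_1}+\frac1{\vec p_2}<1$, i.e. $1<\vec q<\infty$, so that Lemma 2.1 applies on $L^{\vec q}$, while the range $-\frac1n\sum_i 1/q_i<\lambda<0$ and $0\le\lambda_2<\frac1n$ guarantee that the central Morrey and $\mathrm{CBMO}$ spaces involved are nondegenerate. I expect the only genuinely delicate point to be the correct form of the generalized H\"older inequality on mixed-norm spaces combined with the localization identity $\chi_B\mathcal Hg=\chi_B\mathcal H(g\chi_B)$; should direct localization be deemed too lossy, the fallback is to follow the dyadic scheme of Theorem 3.1 verbatim, decomposing $f=\sum_j f_j$ and invoking Lemma 3.1 with the pair $(\vec p_2,\lambda_2)$, in which case the main obstacle moves to summing the resulting geometric series, whose convergence is guaranteed precisely by $-\frac1n\sum_i 1/q_i<\lambda<0$.
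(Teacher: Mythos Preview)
Your argument is correct, and your handling of $J_1$ coincides with the paper's estimate of its term $M_1$. The genuine difference lies in the second piece. The paper does \emph{not} invoke the $L^{\vec q}$-boundedness of $\mathcal H$ on $(b-b_B)f\chi_B$ as you do for $J_2$; instead it keeps the kernel $|x|^{-n}$, performs a double dyadic decomposition of $B$ into annuli $2^kB\setminus 2^{k-1}B$ in $x$ and $2^jB\setminus 2^{j-1}B$ in $y$ (with $j\le k\le 0$), inserts and subtracts $b_{2^jB}$, and is left with two geometric series whose convergence is exactly what consumes the hypotheses $-\tfrac1n\sum_i 1/q_i<\lambda<0$ and $0\le\lambda_2<\tfrac1n$. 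Your localization-plus-Lemma~2.1 route is shorter and, as you note, does not actually require the lower bound on $\lambda$ for the inequality itself. The trade-off is that your shortcut relies on $\chi_B\mathcal Hg=\chi_B\mathcal H(g\chi_B)$, which fails for the dual operator since $\mathcal H^*$ integrates over $\{|t|\ge|x|\}$; hence it does not carry over to Theorem~4.2, whereas the paper's dyadic scheme is the template that adapts to $\mathcal H^*_b$ (with the sum over $j\ge k$ and convergence driven by $\lambda<0$). Your fallback paragraph already anticipates this.
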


\begin{theorem}
Let $1<\vec{p_{1}},\vec{q}<\infty$, $\vec{p_{1}}'<\vec{p_{2}}<\infty$, $\frac{1}{\vec{q}}=\frac{1}{\vec{p_{1}}}+\frac{1}{\vec{p_{2}}}$, $-\frac{1}{n}\sum^{n}_{i=1}\frac{1}{q_{i}}<\lambda<0$, $0\leq\lambda_{2}<\frac{1}{n}$ and $\lambda=\lambda_{1}+\lambda_{2}$. If $b\in \mathrm{CBMO}^{\vec{p_{2}},\lambda_{2}}(\mathbb{R}^{n})$, then $\mathcal{H}^{*}_{b}$ is bounded from $\mathcal{B}^{\vec{p_{1}},\lambda_{1}}(\mathbb{R}^{n})$ to $\mathcal{B}^{\vec{q},\lambda}(\mathbb{R}^{n})$ satisfies the following inequality:
$$\|\mathcal{H}^{*}_{b}f\|_{\mathcal{B}^{\vec{q},\lambda}(\mathbb{R}^{n})}\lesssim\|b\|_{\mathrm{CBMO}^{\vec{p_{2}},\lambda_{2}}(\mathbb{R}^{n})}
\|f\|_{\mathcal{B}^{\vec{p_{1}},\lambda_{1}}(\mathbb{R}^{n})}.$$
\end{theorem}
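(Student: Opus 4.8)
The plan is to mimic the structure of the $L^{\vec p}$-argument in the proof of Theorem 3.1, but adapted to the Morrey scale and with the exponents distributed according to $\frac{1}{\vec q}=\frac{1}{\vec{p_1}}+\frac{1}{\vec{p_2}}$. Writing $\mathcal{H}^{*}_{b}f(x)=\int_{|t|\ge|x|}\frac{(b(x)-b(t))f(t)}{|t|^{n}}\,dt$ and using the dyadic annuli $C_k$, I would first reduce matters to estimating $\|\chi_{k}\mathcal{H}^{*}_{b}f\|_{\vec q}$ for each $k$: by the triangle inequality in $L^{\vec q}$ together with $\chi_{B(0,r)}\le\chi_{B_{k_0}}$ (with $2^{k_0-1}<r\le 2^{k_0}$) one has $\|\chi_{B(0,r)}\mathcal{H}^{*}_{b}f\|_{\vec q}\le\sum_{k\le k_0}\|\chi_k\mathcal{H}^{*}_{b}f\|_{\vec q}$. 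The essential structural difference from the $\mathcal{H}_b$ case is that, because $\mathcal{H}^{*}$ averages over $\{|t|\ge|x|\}$, for $x\in C_k$ the relevant annuli are the $C_j$ with $j\ge k$; I would use $\{|t|\ge|x|\}\subseteq\bigcup_{j\ge k}C_j$ and $|t|^{-n}\thickapprox 2^{-jn}$ on $C_j$ to obtain $|\mathcal{H}^{*}_{b}f(x)|\lesssim\sum_{j\ge k}2^{-jn}\int_{C_j}|b(x)-b(t)||f(t)|\,dt$ for $x\in C_k$.

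Next I would split $|b(x)-b(t)|\le|b(x)-b_{B_k}|+|b(t)-b_{B_k}|$ and treat the two pieces separately. For the $x$-dependent piece the $t$-integral becomes a constant, and I would bound the resulting $\|(b-b_{B_k})\chi_k\|_{\vec q}$ by the extra H\"older step $\|(b-b_{B_k})\chi_k\|_{\vec q}\le\|(b-b_{B_k})\chi_k\|_{\vec{p_2}}\|\chi_k\|_{\vec{p_1}}$, valid since $\frac{1}{\vec q}=\frac{1}{\vec{p_2}}+\frac{1}{\vec{p_1}}$, which lets me invoke the $\mathrm{CBMO}^{\vec{p_2},\lambda_2}$-norm of $b$; the remaining $\int_{C_j}|f|$ is handled by $\|f_j\|_{\vec{p_1}}\|\chi_j\|_{\vec{p_1}'}$ and the $\mathcal{B}^{\vec{p_1},\lambda_1}$-norm of $f$. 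For the $t$-dependent piece I would first replace $b_{B_k}$ by $b_{B_j}$ via the analogue of Lemma 3.1 for $\mathrm{CBMO}^{\vec{p_2},\lambda_2}$, producing a main term $\int_{C_j}|b(t)-b_{B_j}||f(t)|\,dt$ and a correction governed by $|2^{jn\lambda_2}-2^{kn\lambda_2}|$. The main term I would estimate by the triple H\"older inequality on mixed Lebesgue spaces with exponents $\vec{p_2},\vec{p_1},\vec{q}'$, whose reciprocals sum to $1$ precisely because $\frac{1}{\vec q}=\frac{1}{\vec{p_1}}+\frac{1}{\vec{p_2}}$; this is the computational heart of the argument, and bookkeeping the powers of $2^{jn}$ using $\|\chi_{B_j}\|_{\vec p}\thickapprox2^{jn\cdot\frac1n\sum 1/p_i}$, $\lambda_1+\lambda_2=\lambda$, and $\frac1n\sum\frac{1}{q_i'}=1-\frac1n\sum\frac1{q_i}$ should collapse each $j$-summand to a constant multiple of $2^{jn\lambda}$.

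Having reduced everything to $2^{jn\lambda}$ and $2^{jn\lambda_1}$ summands, I would run two geometric series. The inner sum over $j\ge k$ converges exactly because $\lambda<0$ (and $\lambda_1=\lambda-\lambda_2<0$ disposes of the correction term), yielding $\|\chi_k\mathcal{H}^{*}_{b}f\|_{\vec q}\lesssim\|b\|_{\mathrm{CBMO}^{\vec{p_2},\lambda_2}}\|f\|_{\mathcal{B}^{\vec{p_1},\lambda_1}}2^{kn\lambda}\|\chi_{B_k}\|_{\vec q}$. The outer sum over $k\le k_0$ of $2^{kn\lambda}\|\chi_{B_k}\|_{\vec q}\thickapprox2^{kn(\lambda+\frac1n\sum 1/q_i)}$ converges because of the lower bound $-\frac1n\sum\frac1{q_i}<\lambda$, and its value is comparable to $2^{k_0 n(\lambda+\frac1n\sum 1/q_i)}\thickapprox|B(0,r)|^{\lambda}\|\chi_{B(0,r)}\|_{\vec q}$; dividing and taking the supremum over $r$ gives the claim.

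I anticipate the main obstacle to be organizing the mixed-norm H\"older estimates so that all exponent constraints hold simultaneously: the triple H\"older needs $\frac1{\vec q}<1$ (equivalently the hypothesis $\vec{p_1}'<\vec{p_2}$, ensuring $\vec q>1$ so that $\vec{q}'$ is admissible), while the two geometric series pull $\lambda$ in opposite directions, forcing exactly the two-sided window $-\frac1n\sum\frac1{q_i}<\lambda<0$. Verifying that the two-sided split with $b_{B_k}$ replaced by $b_{B_j}$ does not generate a divergent tail, i.e. that the $\lambda_2\ge0$ correction term remains summable against $2^{jn\lambda_1}$, is the one place where the precise signs of $\lambda,\lambda_1,\lambda_2$ must all be used, and I would treat it carefully rather than by analogy with the $\mathcal{H}_b$ case.
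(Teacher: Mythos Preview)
Your proposal is correct and is essentially the natural adaptation of the paper's argument. The paper does not write out a proof of Theorem~4.2; it simply notes that $\mathcal{H}$ and $\mathcal{H}^{*}$ are dual and that the method of the Theorem~4.1 proof carries over. Your plan fills in precisely what that adaptation looks like, with the correct structural change (the inner sum runs over $j\ge k$ rather than $j\le k$), and your two geometric-series checks use exactly the two ends of the hypothesis $-\tfrac{1}{n}\sum\tfrac{1}{q_i}<\lambda<0$.

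There are two organizational differences worth noting. First, the paper's proof of Theorem~4.1 works with a \emph{local} dyadic family $2^{k}B$, $k\le 0$, relative to the fixed ball $B=B(0,R)$, while you use the global grid $B_k=B(0,2^k)$ and then sum $k\le k_0$; the bookkeeping is equivalent. Second, for the piece $\|\chi_B(b-b_B)\mathcal{H}f\|_{\vec q}$ (your Piece~1 analogue) the paper invokes Lemma~2.1 directly, using that $\mathcal{H}f(x)=\mathcal{H}(f\chi_B)(x)$ for $x\in B$. For $\mathcal{H}^{*}$ this localization fails, since $\mathcal{H}^{*}f$ on $B$ sees $f$ on all of $\{|t|\ge|x|\}$; your choice to avoid Lemma~2.1 and handle Piece~1 by the same dyadic H\"older estimate (with the extra factorization $\|(b-b_{B_k})\chi_k\|_{\vec q}\le\|(b-b_{B_k})\chi_k\|_{\vec{p_2}}\|\chi_k\|_{\vec{p_1}}$) is therefore not merely a stylistic alternative but the right move for the dual operator. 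The only small caveat is the endpoint $\lambda_2=0$: Lemma~3.1 as stated degenerates there, and the correction term becomes $C(j-k)\|b\|$ rather than $C|2^{jn\lambda_2}-2^{kn\lambda_2}|\|b\|$; you already flagged this, and the resulting $\sum_{j\ge k}(j-k)2^{jn\lambda}$ still converges since $\lambda<0$.
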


Now we give the proofs of Theorems 4.1 and 4.2 in this position. In fact, since $\mathcal{H}$ and $\mathcal{H}^{*}$ are dual operators to each other, the methods of the proofs are standard, so we only provide proof of Theorem 4.1 with the following.

\begin{proof}
Let $f$ be a function in $\mathcal{B}^{\vec{p_{1}},\lambda_{1}}(\mathbb{R}^{n})$. For fixed $R>0$, denote $B(0,R)$ by $B$. We need to check that the fact
$$\|\mathcal{H}_{b}f\chi_{B}\|_{\vec{q}}\lesssim|B|^{\lambda}\|\chi_{B}\|_{\vec{q}}\|b\|_{\mathrm{CBMO}^{\vec{p_{2}},\lambda_{2}}(\mathbb{R}^{n})}
\|f\|_{\mathcal{B}^{\vec{p_{1}},\lambda_{1}}(\mathbb{R}^{n})}$$
holds. The Minkowski inequality on mixed Lebesgue space gives that
\begin{align*}
\|\mathcal{H}_{b}f(\cdot)\chi_{B}(\cdot)\|_{\vec{q}}&=\Big\|\chi_{B}(\cdot)\frac{1}{|\cdot|^{n}}\int_{|y|<|\cdot|}\big(b(\cdot)-b(y)\big)f(y)dy\Big\|_{\vec{q}}\\
&\leq\Big\|\chi_{B}(\cdot)\frac{1}{|\cdot|^{n}}\int_{|y|<|\cdot|}\big(b(\cdot)-b_{B}\big)f(y)dy\Big\|_{\vec{q}}\\
&\quad+\Big\|\chi_{B}(\cdot)\frac{1}{|\cdot|^{n}}\int_{|y|<|\cdot|}\big(b(y)-b_{B}\big)f(y)dy\Big\|_{\vec{q}}\\
&=\Big\|\chi_{B}\big(b-b_{B}\big)\mathcal{H}(f\chi_{B})\Big\|_{\vec{q}}\\
&\quad+\Big\|\chi_{B}(\cdot)\frac{1}{|\cdot|^{n}}\int_{|y|<|\cdot|}\big(b(y)-b_{B}\big)f(y)dy\Big\|_{\vec{q}}\\
&:=M_{1}+M_{2}.
\end{align*}

First, we estimate $M_{1}$. For $\frac{1}{\vec{q}}=\frac{1}{\vec{p_{1}}}+\frac{1}{\vec{p_{2}}}$, by the H\"{o}lder inequality on mixed Lebesgue space and the boundedness of $\mathcal{H}$ from $L^{\vec{p_{1}}}(\mathbb{R}^{n})$ to $L^{\vec{p_{1}}}(\mathbb{R}^{n})$ in Lemma 2.1, we get
\begin{align*}
M_{1}&\leq\|\chi_{B}\big(b-b_{B}\big)\|_{\vec{p_{2}}}\|\mathcal{H}(f\chi_{B})\|_{\vec{p_{1}}}\\
&\leq|B|^{\lambda_{2}}\|\chi_{B}\|_{\vec{p_{2}}}\|b\|_{\mathrm{CBMO}^{\vec{p_{2}},\lambda_{2}}(\mathbb{R}^{n})}\|f\chi_{B}\|_{\vec{p_{1}}}\\
&\leq|B|^{\lambda}\|\chi_{B}\|_{\vec{q}}\|b\|_{\mathrm{CBMO}^{\vec{p_{2}},\lambda_{2}}(\mathbb{R}^{n})}\|f\|_{\mathcal{B}^{\vec{p_{1}},\lambda_{1}}(\mathbb{R}^{n})},
\end{align*}
where we used the condition $\lambda=\lambda_{1}+\lambda_{2}$.

Next, we estimate $M_{2}$. By the Minkowski inequality on mixed Lebesgue space, we have
\begin{align*}
M_{2}&=\Big\|\chi_{B}(\cdot)\frac{1}{|\cdot|^{n}}\int_{B(0,|\cdot|)}\big(b(y)-b_{B}\big)f(y)dy\Big\|_{\vec{q}}\\
&=\sum^{0}_{k=-\infty}\Big\|\chi_{2^{k}B\backslash2^{k-1}B}(\cdot)\frac{1}{|\cdot|^{n}}\int_{B(0,|\cdot|)}\big(b(y)-b_{B}\big)f(y)dy\Big\|_{\vec{q}}
\end{align*}
\begin{align*}
&\lesssim\sum^{0}_{k=-\infty}\frac{1}{|2^{k}B|}\Big\|\chi_{2^{k}B\backslash2^{k-1}B}(\cdot)\sum^{k}_{j=-\infty}\int_{2^{j}B\backslash2^{j-1}B}
|b(y)-b_{B}||f(y)|dy\Big\|_{\vec{q}}\\
&\leq\sum^{0}_{k=-\infty}\frac{1}{|2^{k}B|}\Big\|\chi_{2^{k}B\backslash2^{k-1}B}(\cdot)\sum^{k}_{j=-\infty}\int_{2^{j}B\backslash2^{j-1}B}
|b(y)-b_{2^{j}B}||f(y)|dy\Big\|_{\vec{q}}\\
&\quad+\sum^{0}_{k=-\infty}\frac{1}{|2^{k}B|}\Big\|\chi_{2^{k}B\backslash2^{k-1}B}(\cdot)\sum^{k}_{j=-\infty}\int_{2^{j}B\backslash2^{j-1}B}
|b_{2^{j}B}-b_{B}||f(y)|dy\Big\|_{\vec{q}}\\
&:=M_{21}+M_{22}.
\end{align*}

For $M_{21}$, it follows from $\frac{1}{\vec{q}}=\frac{1}{\vec{p_{1}}}+\frac{1}{\vec{p_{2}}}$, $1=\frac{1}{\vec{p_{1}}}+\frac{1}{\vec{p_{1}}'}$ and the H\"{o}lder inequality on mixed Lebesgue space, we obtain
\begin{align*}
M_{21}&\leq\sum^{0}_{k=-\infty}\frac{1}{|2^{k}B|}\Big\|\chi_{2^{k}B\backslash2^{k-1}B}(\cdot)\sum^{k}_{j=-\infty}\int_{2^{j}B}
|b(y)-b_{2^{j}B}||f(y)|dy\Big\|_{\vec{q}}\\
&\leq\sum^{0}_{k=-\infty}\frac{1}{|2^{k}B|}\|\chi_{2^{k}B}\|_{\vec{q}}\sum^{k}_{j=-\infty}\|(b-b_{2^{j}B})\chi_{2^{j}B}\|_{\vec{p_{1}}'}\|f\chi_{2^{j}B}\|_{\vec{p_{1}}}\\
&\leq\sum^{0}_{k=-\infty}\frac{1}{|2^{k}B|}\|\chi_{2^{k}B}\|_{\vec{q}}\sum^{k}_{j=-\infty}\|(b-b_{2^{j}B})\chi_{2^{j}B}\|_{\vec{p_{2}}}\|\chi_{2^{j}B}\|_{\vec{q}'}
|2^{j}B|^{\lambda_{1}}\|\chi_{2^{j}B}\|_{\vec{p_{1}}}\|f\|_{\mathcal{B}^{\vec{p_{1}},\lambda_{1}}(\mathbb{R}^{n})}\\
&\leq\sum^{0}_{k=-\infty}\frac{1}{|2^{k}B|}\|\chi_{2^{k}B}\|_{\vec{q}}\sum^{k}_{j=-\infty}|2^{j}B|^{\lambda+1}
\|b\|_{\mathrm{CBMO}^{\vec{p_{2}},\lambda_{2}}(\mathbb{R}^{n})}\|f\|_{\mathcal{B}^{\vec{p_{1}},\lambda_{1}}(\mathbb{R}^{n})}\\
&\leq|B|^{\lambda}\|\chi_{B}\|_{\vec{q}}\|b\|_{\mathrm{CBMO}^{\vec{p_{2}},\lambda_{2}}(\mathbb{R}^{n})}
\|f\|_{\mathcal{B}^{\vec{p_{1}},\lambda_{1}}(\mathbb{R}^{n})}\sum^{0}_{k=-\infty}2^{kn(\lambda+\frac{1}{n}\sum^{n}_{i=1}\frac{1}{q_{i}})}\\
&\lesssim|B|^{\lambda}\|\chi_{B}\|_{\vec{q}}\|b\|_{\mathrm{CBMO}^{\vec{p_{2}},\lambda_{2}}(\mathbb{R}^{n})}
\|f\|_{\mathcal{B}^{\vec{p_{1}},\lambda_{1}}(\mathbb{R}^{n})},
\end{align*}
where
$$\|\chi_{2^{j}B}\|_{\vec{p_{1}}}\|\chi_{2^{j}B}\|_{\vec{p_{2}}}\|\chi_{2^{j}B}\|_{\vec{q}'}\thickapprox
|2^{j}B|^{\frac{1}{n}\sum^{n}_{i=1}\frac{1}{p_{1i}}+\frac{1}{n}\sum^{n}_{i=1}\frac{1}{p_{2i}}+\frac{1}{n}\sum^{n}_{i=1}\frac{1}{q_{i}'}}=|2^{j}B|,$$
and we used the fact that $-\frac{1}{n}\sum^{n}_{i=1}\frac{1}{q_{i}}<\lambda<0$.

To estimate $M_{22}$, the following fact is applied. For $\lambda_{2}\geq0$, by the H\"{o}lder inequality on mixed Lebesgue space $(1=\frac{1}{\vec{p_{2}}}+\frac{1}{\vec{p_{2}}'})$, we have
\begin{align*}
|b_{2^{j}B}-b_{B}|&\leq\sum^{-1}_{i=j}|b_{2^{i+1}B}-b_{2^{i}B}|\\
&\leq\sum^{-1}_{i=j}\frac{1}{|2^{i}B|}\int_{2^{i}B}|b(y)-b_{2^{i+1}B}|dy\\
&\leq\sum^{-1}_{i=j}\frac{1}{|2^{i}B|}\|(b-b_{2^{i+1}B})\chi_{2^{i+1}B}\|_{\vec{p_{2}}}\|\chi_{2^{i+1}B}\|_{\vec{p_{2}}'}\\
&\leq\|b\|_{\mathrm{CBMO}^{\vec{p_{2}},\lambda_{2}}(\mathbb{R}^{n})}\sum^{-1}_{i=j}\frac{1}{|2^{i}B|}|2^{i+1}B|^{\lambda_{2}}\|\chi_{2^{i+1}B}\|_{\vec{p_{2}}}
\|\chi_{2^{i+1}B}\|_{\vec{p_{2}}'}
\end{align*}
\begin{align*}
&\lesssim|B|^{\lambda_{2}}\|b\|_{\mathrm{CBMO}^{\vec{p_{2}},\lambda_{2}}(\mathbb{R}^{n})}\sum^{-1}_{i=j}2^{(i+1)n\lambda_{2}}\\
&\lesssim|j||B|^{\lambda_{2}}\|b\|_{\mathrm{CBMO}^{\vec{p_{2}},\lambda_{2}}(\mathbb{R}^{n})}.
\end{align*}

Therefore, for $-\frac{1}{n}\sum^{n}_{i=1}\frac{1}{q_{i}}<\lambda<0$, $0\leq\lambda_{2}<\frac{1}{n}$ and $\lambda=\lambda_{1}+\lambda_{2}$, by the H\"{o}lder inequality $(1=\frac{1}{\vec{p_{1}}}+\frac{1}{\vec{p_{1}}'})$ on mixed Lebesgue space, we obtain
\begin{align*}
M_{22}&\lesssim|B|^{\lambda_{2}}\|b\|_{\mathrm{CBMO}^{\vec{p_{2}},\lambda_{2}}(\mathbb{R}^{n})}\sum^{0}_{k=-\infty}\frac{1}{|2^{k}B|}
\Big\|\chi_{2^{k}B\backslash2^{k-1}B}(\cdot)\sum^{k}_{j=-\infty}|j|\int_{2^{j}B}|f(y)|dy\Big\|_{\vec{q}}\\
&\leq|B|^{\lambda_{2}}\|b\|_{\mathrm{CBMO}^{\vec{p_{2}},\lambda_{2}}(\mathbb{R}^{n})}\sum^{0}_{k=-\infty}\frac{1}{|2^{k}B|}
\|\chi_{2^{k}B}\|_{\vec{q}}\sum^{k}_{j=-\infty}|j|\cdot\|f\chi_{2^{j}B}\|_{\vec{p_{1}}}\|\chi_{2^{j}B}\|_{\vec{p_{1}}'}\\
&\leq|B|^{\lambda_{2}}\|b\|_{\mathrm{CBMO}^{\vec{p_{2}},\lambda_{2}}(\mathbb{R}^{n})}\|f\|_{\mathcal{B}^{\vec{p_{1}},\lambda_{1}}(\mathbb{R}^{n})}
\sum^{0}_{k=-\infty}\frac{1}{|2^{k}B|}\|\chi_{2^{k}B}\|_{\vec{q}}\sum^{k}_{j=-\infty}|j||2^{j}B|^{\lambda_{1}+1}\\
&\lesssim|B|^{\lambda}\|\chi_{B}\|_{\vec{q}}\|b\|_{\mathrm{CBMO}^{\vec{p_{2}},\lambda_{2}}(\mathbb{R}^{n})}
\|f\|_{\mathcal{B}^{\vec{p_{1}},\lambda_{1}}(\mathbb{R}^{n})}\sum^{0}_{k=-\infty}|k|2^{kn(\lambda_{1}+\frac{1}{n}\sum^{n}_{i=1}\frac{1}{q_{i}})}\\
&\lesssim|B|^{\lambda}\|\chi_{B}\|_{\vec{q}}\|b\|_{\mathrm{CBMO}^{\vec{p_{2}},\lambda_{2}}(\mathbb{R}^{n})}
\|f\|_{\mathcal{B}^{\vec{p_{1}},\lambda_{1}}(\mathbb{R}^{n})}.
\end{align*}
Combining the estimates of $M_{21}$ and $M_{22}$, we deduce that
$$M_{2}\lesssim|B|^{\lambda}\|\chi_{B}\|_{\vec{q}}\|b\|_{\mathrm{CBMO}^{\vec{p_{2}},\lambda_{2}}(\mathbb{R}^{n})}
\|f\|_{\mathcal{B}^{\vec{p_{1}},\lambda_{1}}(\mathbb{R}^{n})}.$$
Thus, $M_{1}$ together with $M_{2}$ implies that
$$\|\mathcal{H}_{b}f\chi_{B}\|_{\vec{q}}\lesssim|B|^{\lambda}\|\chi_{B}\|_{\vec{q}}\|b\|_{\mathrm{CBMO}^{\vec{p_{2}},\lambda_{2}}(\mathbb{R}^{n})}
\|f\|_{\mathcal{B}^{\vec{p_{1}},\lambda_{1}}(\mathbb{R}^{n})}.$$
This completes the proof of Theorem 4.1.
\end{proof}

\noindent$\textbf{Declarations.}$\\
The authors declare that there is no conflict of interests regarding the publication of this paper.
\noindent$\textbf{Acknowledgement.}$ \\
The authors would like to express their gratitude to the referee for his/her very valuable comments.

\bigskip

\noindent Wenna Lu

\medskip

\noindent College of Mathematics and System Sciences,\ Xinjiang University\\
\"{U}r\"{u}mqi  830046,\ People's Republic of China

\smallskip

\noindent{\it E-mail address}: \texttt{luwnmath@126.com}

\bigskip

\noindent Jiang Zhou$^\ast$ (Corresponding author)

\medskip

\noindent College of Mathematics and System Sciences,\ Xinjiang University\\
\"{U}r\"{u}mqi  830046,\ People's Republic of China

\smallskip
\noindent{\it E-mail address}: \texttt{zhoujiang@xju.edu.cn}

\end{document}